\theoremstyle{plain}
  \newtheorem{thm}{Theorem}[section]
  \newtheorem{lem}[thm]{Lemma}
  \newtheorem{prop}[thm]{Proposition}
  \newtheorem{cor}[thm]{Corollary} 
	\newtheorem*{thm*a}{Theorem A}
	\newtheorem*{thm*b}{Theorem B}
\theoremstyle{definition}
  \newtheorem{defn}[thm]{Definition}
  \newtheorem{rmk}[thm]{Remark}
  \newtheorem*{ack*}{Acknowledgement}
  \newtheorem*{ques*}{Question}
\theoremstyle{plain}
\numberwithin{equation}{section}
\newcommand\ip[2]{\langle{#1},{#2}\rangle}
\newcommand\oh{\frac{1}{2}}
\newcommand\dd{{\mathrm d}}
\newcommand\w{\wedge}
\newcommand\sm{\sigma}
\newcommand\dt{\delta}
\newcommand\vep{\varepsilon}
\newcommand\vph{\varphi}
\newcommand\om{\omega}
\newcommand\ta{\theta}
\newcommand\kp{\kappa}
\newcommand\ld{\lambda}
\newcommand\Sm{\Sigma}
\newcommand\Gm{\Gamma}
\newcommand\Ld{\Lambda}
\newcommand\Ta{\Theta}
\newcommand\CC{\mathcal{C}}
\newcommand\CO{\mathcal{O}}
\newcommand\CA{\mathcal{A}}
\newcommand\CR{\mathcal{R}}
\newcommand\CJ{\mathcal{J}}
\newcommand\FR{\mathfrak{R}}
\newcommand\RSO{\mathrm{SO}}
\newcommand\BR{\mathbb{R}}
\newcommand\BN{\mathbb{N}}
\newcommand\BZ{\mathbb{Z}}
\newcommand\br{\bar}
\newcommand\ot{\otimes}
\newcommand\bv{\mathbf{v}}
\DeclareMathOperator{\tr}{tr}
\DeclareMathOperator{\Hess}{Hess}
\DeclareMathOperator{\re}{Re}
\DeclareMathOperator{\im}{Im}
\begin{document}

\title{Global uniqueness of the minimal sphere\\in the Atiyah--Hitchin manifold}

\author{Chung-Jun Tsai}
\address{Department of Mathematics\\
National Taiwan University\\ Taipei 10617\\ Taiwan}
\email{cjtsai@ntu.edu.tw}

\author{Mu-Tao Wang}
\address{Department of Mathematics\\
Columbia University\\ New York\\ NY 10027\\ USA}
\email{mtwang@math.columbia.edu}

\date{\usdate{\today}}

\thanks{Supported in part by Taiwan MOST grants 106-2115-M-002-005-MY2 and 106-2918-I-002-025 (C.-J.\ Tsai), and NSF grants DMS-1405152 (M.-T.\ Wang).  Part of this work was carried out when the first author was visiting Korea Institute for Advanced Study, and when both authors were visiting the Department of Mathematics of the Chinese University of Hong Kong.
The authors would like to thank J.~Lotay for drawing their attention to the Atiyah--Hitichin manifold and suggesting the connection to the strong stability condition.}



\maketitle


\section{Introduction}

In this note, we study submanifold geometry of the Atiyah--Hitchin manifold, a double cover of the $2$-monopole moduli space, which plays an important role in various settings such as the supersymmetric background of string theory.  When the manifold is naturally identified as the total space of a line bundle over $S^2$, the zero section is a distinguished minimal $2$-sphere of considerable interest. In particular, there has been a conjecture  \cite[Remark on p.262]{ref_MW} about the uniqueness of this minimal $2$-sphere among all closed minimal $2$-surfaces. We show that this minimal $2$-sphere satisfies the ``strong stability condition" proposed in our earlier work \cite{ref_TsaiW2}, and confirm the global uniqueness as a corollary.

\newcommand\ang{{\frac{\psi}{2}}}
\section{The Atiyah--Hitchin manifold}

We start by reviewing the geometry of the Atiyah--Hitchin manifold which is denoted by $M$ throughout this paper. 
The underlying manifold\footnote{The Atiyah--Hitchin manifold in literature often refers to a $\mathbb{Z}/2$ quotient of $M$ as a bundle over $\mathbb{RP}^2$.  The manifold $M$ here is an ALF space of type $D_1$.} $M$ is a degree $-4$ complex line bundle over $S^2$. Utilizing the standard charts on $S^2$,  $z, w:\mathbb{C}\to S^2$ with $z={1}/{w}$, we consider the following co-frame on the unit circle bundle ($e^{i\psi}\in S^1$) over $S^2$:
\begin{align*}
\sm^1 &= \oh\left({\dd\psi} + 2i\frac{z\dd\br{z} - \br{z}\dd z}{1+|z|^2}\right) ~,
&\sm^2 &= \re\left[\frac{2\,e^{i\ang}\,\dd z}{1+|z|^2}\right] ~,
&\sm^3 &= \im\left[\frac{2\,e^{i\ang}\,\dd z}{1+|z|^2}\right] ~.
\end{align*}
Although there is ambiguity in the definitions of $\sm^2$ and $\sm^3$, $(\sm^2)^2$, $(\sm^3)^2$ and $\sm^2\w\sm^3$ are well-defined. In particular,
$(\sm^2)^2+(\sm^3)^2=\frac{4 |\dd z|^2}{(1+|z|^2)^2}$ represents the standard round metric of constant Gauss curvature $1$ on $S^2$. The $1$-forms $\sm^1, \sm^2$ and $\sm^3$ satisfy the relation $\dd\sm^1 = \sm^2\w\sm^3$, and its cyclic permutations.  On the other chart, $(w,\vph) = (1/z,\psi + 4\arg z)$.

The Riemannian metric on $M$ takes the following form
\begin{align}
\dd s^2 &= \dd r^2 + a^2(\sm^1)^2 + b^2(\sm^2)^2 + c^2(\sm^3)^2
\label{metric1} \end{align}
where $a,b,c$ are functions in $r\in [0, \infty)$.  Denoting by prime $(~)'$ the derivative with respect to $r$, these \emph{coefficient functions} $a$, $b$, and $c$ are determined by the following system of ODE's:
\begin{align}
{a'} &= \frac{a^2-(b-c)^2}{2bc} ~,
&{b'} &= \frac{b^2-(c-a)^2}{2ca} ~,
&{c'} &= \frac{c^2-(a-b)^2}{2ab} ~,
\label{ODE1} \end{align} with the initial conditions $a(0)=0$, $b(0) =- m$, and $c(0)=m$ for a positive constant $m$. The manifold is oriented by $\dd r\w\sm^1\w\sm^2\w\sm^3$. The metric is complete and 
 the variable $r$ is the geodesic distance to the zero section ($r=0$) with respect to \eqref{metric1}.   
 
The zero section, $r=0$, is a $2$-sphere denoted by $\Sigma$ and oriented by $\sm^2\w\sm^3$.  The induced metric is round of radius $m$.  $\Sigma$ is the minimal sphere referred in the title of this paper.

Here are some other basic properties of the coefficient functions; see \cite[ch.10 and 11]{ref_AH}.  When $r>0$, $a$ and $c$ are positive; $b$ is \emph{negative}.  Moreover, $a'$, $b'$ and $c'$ are all positive.  The explicit forms of these functions can be found after a change of variable \cite[Theorem 11.18]{ref_AH}.  However, the explicit forms are not needed in this paper. The key to solve for the explicit solution of \eqref{ODE1} is to rewrite the equations as
\begin{align}
(ca+ab)' &= \frac{2}{abc}(ca)(ab) ~,  & (ab+bc)' &= \frac{2}{abc}(ab)(bc) ~,  & (bc+ca)' &= \frac{2}{abc}(bc)(ca) ~.
\label{ODE2} \end{align}
The logarithmic derivative of Jacobi theta functions obey the same equations, up to the factor $2/(abc)$.  Hence, the solution can be constructed from elliptic integrals.

\subsection{The geometry near the zero section $\Sigma$} \label{sec_nearby}

It is useful to write down the series expansions of the coefficient functions at $r = 0$.  With the initial condition $a(0)=0$, $-b(0) = m = c(0)$, one deduces from \eqref{ODE1} that
\begin{align}
a(r) &= 2r - \frac{1}{2m^2}r^3 + \CO(r^4) ~,
&\begin{split}
b(r) &= -m + \oh r - \frac{3}{8m}r^2 + \CO(r^3) ~, \\
c(r) &= m + \oh r + \frac{3}{8m}r^2 + \CO(r^3) ~.
\end{split} \label{series1} \end{align}

Here is an interesting point to make.  The metric arises as the natural metric on the monopole moduli space \cite[ch.2 and 3]{ref_AH}, and is smooth.  At first glance, it seems a little bit strange that the expansions of $b$ and $c$ have both even and odd degree terms.  To see why, let
\begin{align}
q(r) = c(r)-b(r)  \quad\text{and}\quad p(r) = c(r)+b(r) ~.
\label{ds} \end{align}
Note that $q(r)>0$ for any $r\geq0$, $q(0)=2m$ and $p(0)=0$.  When $r>0$, \eqref{ODE2} implies that $(a\,p)'>0$, and thus $p>0$.  The metric \eqref{metric1} can be rewritten as
\begin{align*}
\dd s^2 &= \dd r^2 + \frac{a^2}{4}\left({\dd\psi} + 2i\frac{z\dd\br{z} - \br{z}\dd z}{1+|z|^2}\right)^2 + \frac{q^2+p^2}{4}\frac{4\,|\dd z|^2}{(1+|z|^2)^2} - (2\,q\,p)\re\left[\frac{e^{i\psi}(\dd z)^2}{(1+|z|^2)^2}\right] ~.
\end{align*}
With aforementioned conditions, the smoothness of the metric near $r=0$ is equivalent to that $a(r)/r$, $p(r)/r$ and $q(r)$ are smooth functions in $r^2$.

Equation \eqref{ODE1} in terms of $a, p$, and $q$ are
\begin{align*}
a' &= \frac{2(a^2-q^2)}{p^2-q^2} ~,  & q' &= \frac{2q(p^2-a^2)}{a(p^2-q^2)} ~,  & p' &= 2+\frac{2p(q^2-a^2)}{a(p^2-q^2)} ~.
\end{align*}
From these equations and the initial conditions, one derives that $a$ and $p = c+b$ are odd functions in $r$, while $q = c-b$ is an even function in $r$.

\begin{rmk}
This property of $a,p,q$ may not been seen in some of the radial parameters used in the literature \cite{ref_AH1,ref_GM,ref_AH}.  Those parameters are good to construct the explicit form of the solution.  However, at the zero section, those parameters only respect the $\CC^k$ topology for some $k\in\BN$, but not the smooth one.
\end{rmk}

\subsection{Connections and the ASD Einstein equation}

We briefly recall the convention for connections and curvatures. For a Riemannian manifold with metric $\ip{\,}{\,}$ and Levi-Civita connection $\nabla$, our convention for the Riemann curvature tensor is
\begin{align*}
R(X,Y,Z,W) = \ip{\nabla_Z\nabla_W Y - \nabla_W\nabla_Z Y - \nabla_{[Z,W]} Y}{X} ~.
\end{align*}

Let $\{e_i\}$ be a local orthonormal frame.  Denote the coefficient $1$-forms of the Levi-Civita connection by $\om_i^j$: $\nabla e_i = \om_i^j\ot e_j$.  Since the frame is orthonormal, $\om_i^j = -\om_j^i$.  Throughout this paper, we adopt the Einstein summation convention that repeated indexes are summed.  Denote the dual co-frame by $\{\om^i\}$; the covariant derivative of the co-frame is $\nabla\om^j = -\om_i^j\ot\om^i$.
It follows that
\begin{align*}
\dd\om^j = -\om_i^j\w\om^i ~.
\end{align*}

The curvature form is
\begin{align}
\FR_i^j = \dd\om_i^j - \om_i^k\w\om_k^j ~.
\label{R_curv1} \end{align}
It is equivalent to the Riemann curvature tensor by the following relation:
\begin{align}
\FR_i^j(X,Y) = R(e_j,e_i,X,Y)
\label{R_curv2} \end{align}
for any two tangent vectors $X$ and $Y$.

For the Atiyah--Hitchin manifold $M$ with the Riemannian metric given by \eqref{metric1}, consider the following orthonormal co-frame:
\begin{align}
\om^0 &= -\dd r ~,  &\om^1 &= a\,\sm^1 ~,  &\om^2 &= b\,\sm^2 ~,  &\om^3 &= c\,\sm^3 ~.
\end{align}
Note that $\om^0\w\om^1\w\om^2\w\om^3$ is the positive orientation.  Their exterior derivatives are
\begin{align*}
\dd\om^0 = 0  ~,\quad  \dd\om^1 = -\frac{a'}{a} \om^0\w\om^1 + \frac{a}{bc}\om^2\w\om^3 ~,
\end{align*}
and the equations for $\dd\om^2$ and $\dd\om^3$ are similar.  It follows that
\begin{align} \begin{split}
\om_0^1 &= -\frac{a'}{a}\om^1 ~, \\
\om_2^3 &= -\oh\frac{b^2+c^2-a^2}{abc}\om^1 ~,
\end{split} & \begin{split}
\om_0^2 &= -\frac{b'}{b}\om^2 ~, \\
\om_3^1 &= -\oh\frac{a^2+c^2-b^2}{abc}\om^2 ~,
\end{split} & \begin{split}
\om_0^3 &= -\frac{c'}{c}\om^3 ~, \\
\om_1^2 &= -\oh\frac{a^2+b^2-c^2}{abc}\om^3 ~.
\end{split} \label{conn1} \end{align}

It is known that on a simply-connected $4$-manifold, the hyper-K\"ahler condition is equivalent to
$0 = \FR_0^1 + \FR_2^3 = \FR_0^2 + \FR_3^1 = \FR_0^3 + \FR_1^2$.
In terms of the curvature decomposition in four dimensions, this means that only the anti-self-dual Weyl curvature could be non-zero.  Note that for $(i,j,k) = (1,2,3)$ and its cyclic permutation,
\begin{align*}
\FR_0^i + \FR_j^k = \dd(\om_0^i + \om_j^k) + (\om_0^j + \om_k^i)\w(\om_0^k + \om_i^j) ~,
\end{align*}
and thus vanishes if
\begin{align}
\om_0^i + \om_j^k = -\sm^i ~.
\label{asd} \end{align}
From \eqref{conn1}, this condition is exactly the equation \eqref{ODE1}.  One can compare with the case of the Eguchi--Hanson metric, where $\om_0^i + \om_j^k$ vanishes.  See, for example, \cite[Section 2]{ref_TsaiW}.

\subsection{Hyper-K\"ahler structure}
Recall that the hyper-K\"ahler structure is characterized by the existence of three linearly independent parallel self-dual $2$-forms. 
 With the orientation $\om^0\w\om^1\w\om^2\w\om^3$, the space of self-dual 2-forms $\Ld^2_+$ is spanned by
$\om^0\w\om^1 + \om^2\w\om^3$, $\om^0\w\om^2 + \om^3\w\om^1$, and $\om^0\w\om^3 + \om^1\w\om^2$. 
From \eqref{asd}, the Levi-Civita connection on $\Ld^2_+$ reads:
\begin{align}
\nabla(\om^0\w\om^1 + \om^2\w\om^3) &= - \sm^3\ot(\om^0\w\om^2 + \om^3\w\om^1) + \sm^2\ot(\om^0\w\om^3 + \om^1\w\om^2) ~, \label{asd1} \\
\nabla(\om^0\w\om^2 + \om^3\w\om^1) &= \sm^3\ot(\om^0\w\om^1 + \om^2\w\om^3) - \sm^1\ot(\om^0\w\om^3 + \om^1\w\om^2) ~, \notag \\
\nabla(\om^0\w\om^3 + \om^1\w\om^2) &= -\sm^2\ot(\om^0\w\om^1 + \om^2\w\om^3) + \sm^1\ot(\om^0\w\om^2 + \om^3\w\om^1) ~. \notag
\end{align}
We proceed to find three linearly independent parallel self-dual 2-forms. 
Consider the following parametrization of $\RSO(3)$:
\begin{align*}
S = \frac{1}{1+|z|^2}\begin{bmatrix}
2\re(z) & \im(e^{-i\ang}+e^{i\ang}z^2) & \re(e^{-i\ang}-e^{i\ang}z^2) \\
2\im(z) & -\re(e^{-i\ang}+e^{i\ang}z^2) & \im(e^{-i\ang}-e^{i\ang}z^2) \\
1-|z|^2 & 2\im(e^{i\ang}z) & -2\re(e^{i\ang}z)
\end{bmatrix} ~.
\end{align*}
The Maurer--Cartan form is
\begin{align*}
S^{-1}\dd S = \begin{bmatrix} 0 & \sm^3 & -\sm^2 \\ -\sm^3 & 0 & \sm^1 \\ \sm^2 & -\sm^1 & 0 \end{bmatrix} ~,
\end{align*}
which is exactly the connection $1$-form in terms of the basis $\{\om^0\w\om^1 + \om^2\w\om^3,\om^0\w\om^2+\om^3\w\om^1,\om^0\w\om^3+\om^1\w\om^2\}$.  

Three parallel self-dual $2$-forms can be obtained by pairing the row vectors of $S$ with the above basis.  It is easier to use the following expressions:
\begin{align}
\om^0\w\om^1 + \om^2\w\om^3 &= -a\,\dd r\w\sm^1 + \frac{p^2-q^2}{4}\frac{2i\,\dd z\w\dd\br{z}}{(1+|z|^2)^2} ~, \label{cal} \\
(\om^0\w\om^2 + \om^3\w\om^1) + i(\om^0\w\om^3 + \om^1\w\om^2) &= \frac{(p\,e^{i\ang}\,\dd z - q\,e^{-i\ang}\,\dd\br{z})\w(\dd r - ia\,\sm^1)}{1+|z|^2} \notag
\end{align}
where $p$ and $q$ are defined by \eqref{ds}.  Then, the $[\text{3rd row}]$ of $S$ gives
\begin{align} \begin{split}
& \frac{1-|z|^2}{1+|z|^2}\left[ \frac{(p^2-q^2)}{4}\frac{2i\,\dd z\w\dd\br{z}}{(1+|z|^2)^2} - a\,\dd r\w\sm^1 \right] \\
&\quad - 2\im\left[ \frac{\br{z}\,\dd z\w\left(p\,(\dd r - ia\,\sm^1)\right) - q\,\br{z}\,\dd\br{z}\w\left(e^{-i\psi}\,(\dd r - ia\,\sm^1)\right)}{(1+|z|^2)^2} \right] ~,
\end{split} \label{kform} \end{align}
and $[\text{1st row}] + i\,[\text{2nd row}]$ gives
\begin{align} \begin{split}
& \frac{2z}{1+|z|^2}\left[ \frac{(p^2-q^2)}{4}\frac{2i\,\dd z\w\dd\br{z}}{(1+|z|^2)^2} - a\,\dd r\w\sm^1 \right] \\
&\quad - i\frac{\dd z\w\left(p\,(\dd r - ia\,\sm^1)\right) - q\,\dd\br{z}\w\left(e^{-i\psi}\,(\dd r - ia\,\sm^1)\right)}{(1+|z|^2)^2} \\
&\qquad + i\frac{q\,z^2\dd z\w\left(e^{i\psi}\,(\dd r + ia\,\sm^1)\right) - z^2\,\dd\br{z}\w\left(p\,(\dd r + ia\,\sm^1)\right)}{(1+|z|^2)^2} ~.
\end{split} \label{hvform} \end{align}
Recall that $a(r) = 2r + r^{\text{odd}}$, $p(r) = r + r^{\text{odd}}$ and $q(r) = 2m + r^{\text{even}}$ near $r=0$.  It follows that the $2$-forms \eqref{kform} and \eqref{hvform} are indeed smooth.

From \eqref{kform} and \eqref{hvform}, one sees that the restrictions of the 2-forms to the zero section $\Sigma$ become 
\begin{align*}
\frac{1-|z|^2}{1+|z|^2}\left[ \frac{-2im^2\,\dd z\w\dd\br{z}}{(1+|z|^2)^2}\right]
\quad\text{and}\quad \frac{2z}{1+|z|^2}\left[ \frac{-2im^2\,\dd z\w\dd\br{z}}{(1+|z|^2)^2}\right] ~,
\end{align*}
and thus $\Sigma$ is the ``twistor" sphere.  Namely, it is the parameter space of the K\"ahler forms.  The restriction of any K\"ahler form on $\Sigma$ has zero total integral.  The homology class $[\Sigma]$ is a Lagrangian class with respect to any K\"ahler form.

Denote the complex structure corresponding to the self-dual $2$-form given by the $[i\text{-th row}]$ of $S$ by $J_i$, and the complex structure on $\Sm$ by $J_{S^2}$.  By regarding the embedding of $\Sm$ as a map $u:S^2\to M$, the above computation shows that $J_i\circ\dd u = -x_i\,\dd u \circ J_{S^2}$, where $x_1, x_2$, and $x_3$ are the standard coordinate functions on $S^2$ satisfying $x_1+ix_2 = 2z/(1+|z|^2)$ and $x_3 = (1-|z|^2)/(1+|z|^2)$.  In particular, the map $u$ obeys
\begin{align}
\dd u\circ J_{S^2} &= - x_1\,J_1\circ\dd u - x_2\,J_2\circ\dd u - x_3\,J_3\circ\dd u ~.
\label{qtn} \end{align}

\subsection{Curvatures}
We compute the curvature components of $M$ in this section. Recalling the formula of the $\FR_0^1$ component
\begin{align*}
\FR_0^1= \dd\om_0^1-\om_0^2\wedge \om_2^1-\om_0^3\wedge\om_3^1
\end{align*}
and substituting the connection forms from \eqref{conn1}, we 
derive \[\FR_0^1=\frac{a''}{a}\, \om^0\w\om^1 - \kp(a,b,c) \, \om^2\w\om^3 ~,\] where $\kp(a, b, c)$ is defined by \begin{align}
\kp(a,b,c)\equiv \frac{1}{2(abc)^2} \left[ 2a^4 - a^2(b-c)^2 - a^3(b+c) + a(b-c)^2(b+c) - (b+c)^2(b-c)^2 \right] ~. \label{curvf}
\end{align}

On the other hand, from \eqref{ODE1}, it can be checked that ${a''}/{a}=\kp(a,b,c)$, or $R_{1001} = R_{2301}$, a fact that can be derived 
alternatively from the hyper-K\"ahler condition.  One verifies directly that $\kp(a,b,c) = \kp(a,c,b)$ and $\kp(a,b,c) + \kp(c,a,b) + \kp(b,c,a) = 0$.
Due to the formal cyclic symmetry of $(a,b,c)$, all the non-trivial components of the Riemann curvature tensor are listed as follows (up to the symmetry of the curvature tensor).
\begin{align} \begin{cases}
R_{1001} = R_{2301} = R_{2332} = \kp(a,b,c) = \displaystyle\frac{a''}{a} ~, & \medskip\\
R_{2002} = R_{3102} = R_{3113} = \kp(b,c,a) = \displaystyle\frac{b''}{b} ~, & \medskip \\
R_{3003} = R_{1203} = R_{1221} = \kp(c,a,b) = \displaystyle\frac{c''}{c} ~. &
\end{cases} \label{curv} \end{align}

\subsection{Totally geodesic surfaces} \label{sec_surface}

In \cite[ch.7 and 12]{ref_AH}, two kinds of totally geodesic surfaces are introduced to study the geodesics of the ambient space \cite[ch.13]{ref_AH}.  
\begin{enumerate}
\item In the formulation here, the first kind is the fiber of the $-4$-bundle.  For example, set $z = 0$.  The induced metric is $\dd r^2 + \frac{a^2}{4}\dd\psi^2$.

\item The second kind is topologically a cylinder.  For instance, consider $(r\,e^{i\psi},z) = (s\,e^{-2i\ta}, e^{i\ta})$ for $(s,e^{i\ta})\in\BR\times S^1$.  The induced metric is $\dd s^2 + c^2\dd\ta^2$ for $s>0$, and $\dd s^2 + b^2\dd\ta^2$ for $s<0$.  One may also take the $S^1$-factor to be the great circle, $\{\im z = 0\}$ or $\{\re z = 0\}$, and take the $\BR^1$-factor to be a line on the $re^{i\psi}$-plane with suitable direction.
\end{enumerate}
Each of the above examples is holomorphic with respect to some complex structure.  The readers are directed to \cite{ref_AH} for more discussions.

\section{Geometric properties of the minimal sphere}

\subsection{Strong stability}

The Jacobi operator of the volume functional on a minimal submanifold is $\CJ = (\nabla^\perp)^*\nabla^\perp + \CR -\CA$.  The concrete form of the zeroth order part is
\begin{align*}
(\CR-\CA)(V) &= \sum_{\mu,\nu}\left[ -\sum_{\ell} R_{\ell\mu\ell\nu} V^\mu - \sum_{\ell,k}h_{\mu\ell k}h_{\nu\ell k}V^\mu \right]e_\nu
\end{align*}
on a normal vector $V = \sum_\mu V^\mu e_\mu$.  Here, $k,\ell$ are indices for the orthonormal frame of the tangential part, and $\mu,\nu$ are for the normal part.  In \cite[Definition 3.1]{ref_TsaiW2}, a minimal submanifold is said to be strongly stable if $\CR - \CA$ is \emph{pointwise} positive definite.  It is clear that strong stability implies strict stability, i.e.\ $\CJ$ is a positive operator.  In \cite[Proposition 5.5]{ref_MW}, the minimal sphere $\Sm$ is shown to be strictly stable.  We show that it is indeed strongly stable.

\begin{prop} \label{prop_sstable}
The minimal sphere $\Sm$ in the Atiyah--Hitchin manifold is strongly stable.
\end{prop}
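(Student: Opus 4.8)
\emph{Proposed proof.} The plan is to compute the bilinear form $\CR-\CA$ on the rank-two normal bundle $N\Sm$ directly, from the connection forms \eqref{conn1} and the curvature list \eqref{curv}, and to show that it equals a positive constant multiple of the metric; strong stability is then immediate. As orthonormal normal frame I would use $\{e_0,e_1\}$ with $e_0=-\pl_r$ and $e_1$ the remaining unit normal; note that $e_1$ is not defined along $\Sm$ since $a(0)=0$, but the coefficients $R_{\ell\mu\ell\nu}$ and $\hm{\mu}{\ell}{k}$ (with $\ell\in\{2,3\}$, $\mu,\nu\in\{0,1\}$) that enter $\CR-\CA$ are functions of $r$ alone, hence extend continuously to $r=0$, and $\CR-\CA$ is a well-defined endomorphism of $N\Sm$ regardless of frame; so it is legitimate to evaluate everything in this frame and pass to the limit $r\to0$ using the expansions \eqref{series1}.

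I would first record the second fundamental form. From \eqref{conn1}, along $\Sm$ one has
\[
\II(e_2,e_2)=\frac{b'}{b}\,e_0~,\qquad \II(e_3,e_3)=\frac{c'}{c}\,e_0~,\qquad \II(e_2,e_3)=-\oh\,\frac{a^2+c^2-b^2}{abc}\,e_1~,
\]
with $\II(e_2,e_2),\II(e_3,e_3)$ having no $e_1$-component and $\II(e_2,e_3)$ no $e_0$-component. Evaluating at $r=0$ with \eqref{series1} gives $\tfrac{b'}{b}=-\tfrac1{2m}$, $\tfrac{c'}{c}=\tfrac1{2m}$, $\tfrac{a^2+c^2-b^2}{abc}\to-\tfrac1m$; in particular the mean curvature $\II(e_2,e_2)+\II(e_3,e_3)$ vanishes (as it must, $\Sm$ being minimal), $\Sm$ is \emph{not} totally geodesic, and $|\II|^2=\tfrac1{m^2}$. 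Consequently the $\CA$-term $-\sum_{\ell,k}\hm{\mu}{\ell}{k}\hm{\nu}{\ell}{k}$ is the diagonal matrix $-\tfrac1{2m^2}\,\mathrm{Id}$ on $N\Sm$.

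Next I would read the curvature term $-\sum_\ell R_{\ell\mu\ell\nu}$ off \eqref{curv}. Its off-diagonal entries involve $\RM{2}{0}{2}{1}$ and $\RM{3}{0}{3}{1}$, whose index multisets do not occur among the nontrivial components listed, so these vanish; the diagonal entries involve $\RM{2}{0}{2}{0}=-\RM{2}{0}{0}{2}=-b''/b$ and $\RM{3}{0}{3}{0}=-c''/c$ for $\mu=\nu=0$, and $\RM{2}{1}{2}{1}=-c''/c$, $\RM{3}{1}{3}{1}=-b''/b$ for $\mu=\nu=1$, using the curvature symmetries. Since $b''/b=c''/c=\tfrac{3}{4m^2}$ at $r=0$ by \eqref{series1}, the curvature term is $\tfrac{3}{2m^2}\,\mathrm{Id}$, and therefore
\[
(\CR-\CA)\big|_{N\Sm}=\Bigl(\frac{3}{2m^2}-\frac1{2m^2}\Bigr)\mathrm{Id}=\frac1{m^2}\,\mathrm{Id}~,
\]
which is pointwise positive definite. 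This proves the proposition. (A more conceptual route: $M$ carries a transitive isometric $\RSO(3)$-action, so $\CR-\CA$ is determined by its value at a single point of $\Sm$, where the isotropy $\RSO(2)$ acts on $N\Sm$ by a nontrivial rotation because $N\Sm$ has degree $-4$; by Schur's lemma $\CR-\CA$ is then forced to be a multiple of the identity, reducing the problem to the trace $-\sum_{\mu,\ell}R_{\ell\mu\ell\mu}-|\II|^2$ at one point.)

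The only real difficulty I expect is care: matching the nine coefficients in \eqref{curv} to the mixed sectional curvatures $R_{\ell\mu\ell\nu}$ through the symmetries of the Riemann tensor, and being honest about the degeneration of $e_1$ at $r=0$ — which causes no harm here because the matrix entries depend only on $r$ and the answer $\tfrac1{m^2}\,\mathrm{Id}$ is rotation-invariant. I would also remark that the eigenvalue $1/m^2$ equals the Gauss curvature of $\Sm$; this is presumably forced by the hyper-K\"ahler (Ricci-flat) condition together with the twistorial identity \eqref{qtn}, but I would simply confirm it through the computation above.
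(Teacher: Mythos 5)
Your computation is correct and is essentially the paper's Proof 1: the same orthonormal frame, the same second fundamental form components $h_{022}=-\tfrac{1}{2m}$, $h_{033}=h_{123}=h_{132}=\tfrac{1}{2m}$, the same curvature values $b''/b=c''/c=\tfrac{3}{4m^2}$ at $r=0$, and the same conclusion $\CR-\CA=\tfrac{1}{m^2}\,\mathrm{Id}$. (The paper also gives a second, calculation-free proof via a special-Lagrangian-type argument using the twistor family, in the spirit of your parenthetical symmetry remark.)
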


\begin{proof}[Proof 1: direct computation]
Note that the indices $2,3$ are tangential directions, and $0,1$ are normal directions.  According to \eqref{series1} and \eqref{conn1}, the components of its second fundamental form are
\begin{align*}
\frac{1}{2m} = -h_{022} = h_{033} = h_{123} = h_{132}
\qquad\text{and}\qquad  0 = h_{023} = h_{032} = h_{122} = h_{133} ~.
\end{align*}
In \cite[Remark on p.37]{ref_AH}, Atiyah and Hitchin showed that $\Sm$ is not a totally geodesic by representation theory.  By plugging \eqref{series1} into \eqref{curvf},
\begin{align}
\kp(a,b,c) = -\frac{3}{2m^2}  \qquad\text{and}\qquad
\kp(b,c,a) = \kp(c,a,b) = \frac{3}{4m^2}  \quad\text{ at } r=0 ~.
\label{curv0} \end{align}
With \eqref{curv}, the components of $\CR-\CA$ are as follows.
\begin{align*}
-\sum_{j=2}^3 R_{j0j0} - \sum_{j,k=2}^3 h_{0jk}h_{0jk} &= R_{2002} + R_{3003} - (h_{022})^2 - (h_{033})^2 = \frac{1}{m^2} ~, \\
-\sum_{j=2}^3 R_{j1j1} - \sum_{j,k=2}^3 h_{1jk}h_{1jk} &= R_{2112} + R_{3113} - (h_{123})^2 - (h_{132})^2 = \frac{1}{m^2} ~,
\end{align*}
and the off-diagonal part vanishes.  Clearly, $\CR-\CA$ is positive definite.
\end{proof}

There is a calculation-free argument.  Here is the brief explanation.

\begin{proof}[Proof 2: special Lagrangian type argument]
Although the minimal sphere can never be (special) Lagrangian, the argument in \cite[Appendix A.1]{ref_TsaiW2} works as well.  Note that for any $p\in\Sm$, $T_p\Sm$ is a special Lagrangian plane with respect to \emph{some} Calabi--Yau structure.  For instance, when $|z|=1$, $T_p\Sm$ is Lagrangian with respect to \eqref{kform}.  Its phase with respect to \eqref{hvform} is basically $\arg z$.  The computation in \cite[Appendix A.1]{ref_TsaiW2} is tensorial.  By using the complex structure determined by the holomorphic volume form \eqref{hvform}, the computation works at \emph{any} point with $|z|=1$. Since $\Sm$ is the twistor sphere, the argument works everywhere on $\Sm$.  It follows that $\CR-\CA$, as a linear map on the normal bundle,  is 
a multiple of the identity map.
\end{proof}

By applying \cite[Theorem 6.2]{ref_TsaiW2}, the minimal sphere $\Sm$ is $\CC^1$ stable under the mean curvature flow.  
\begin{cor}
There exists an $\vep>0$ which has the following significance.  For any surface $\Gm$ satisfying
$\sup_{q\in\Gm} \left( r^2 (q) + (1+(\om^2\w\om^3)(T_q\Gm)) \right) < \vep $,
the mean curvature flow $\Gm_t$ with $\Gm_0 = \Gm$ exists for all time, and converges smoothly to $\Sm$ as $t\to\infty$.
\end{cor}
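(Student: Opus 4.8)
The plan is to obtain the corollary as an immediate consequence of the dynamical stability theorem \cite[Theorem 6.2]{ref_TsaiW2}, whose hypotheses are supplied by Proposition~\ref{prop_sstable}. By that proposition $\Sm$ is a closed, embedded, strongly stable minimal surface in $M$: indeed $\CR-\CA$ equals $\tfrac{1}{m^2}$ times the identity on the normal bundle $N\Sm$, so the strong-stability constant is $1/m^2$. The metric \eqref{metric1} is complete, and a precompact neighbourhood of $\Sm$ has bounded geometry, so \cite[Theorem 6.2]{ref_TsaiW2} applies to $\Sm\subset M$ and yields a tubular neighbourhood $U$ of $\Sm$ together with a $\delta>0$ such that: any closed surface $\Gm_0\subset U$ which is the normal graph over $\Sm$ of a section of $N\Sm$ of $\CC^1$-norm less than $\delta$ generates a mean curvature flow $\Gm_t$ that exists for all $t\geq0$, remains in $U$, and converges smoothly (exponentially, with rate controlled by $1/m^2$) to $\Sm$ as $t\to\infty$.

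It remains to translate the coordinate-free smallness hypothesis of the corollary into the statement ``$\Gm$ is a $\CC^1$-small normal graph over $\Sm$''. Two observations make this routine. First, $r$ is the geodesic distance to $\Sm$, so $r^2(q)$ controls the $\CC^0$-distance from $\Gm$ to $\Sm$. Second, $\om^2\w\om^3 = bc\,\sm^2\w\sm^3$ extends to a smooth $2$-form on all of $M$ of comass one, and its value on an oriented tangent plane is $\geq-1$, with equality precisely on the correctly-oriented tangent planes of $\Sm$ (there $bc=-m^2$ and $\om^2\w\om^3=-\vol_{\Sm}$); hence $1+(\om^2\w\om^3)(T_q\Gm)\geq0$ is a faithful measure of the angle between $T_q\Gm$ and the tangent distribution of $\Sm$. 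Choose $\vep$ small enough that $\{r^2<\vep\}\subset U$ and that the angle bound forces $d\pi$ to be an isomorphism on every tangent plane of $\Gm$, where $\pi\colon U\to\Sm$ is the nearest-point projection. Then $\pi|_\Gm$ is a covering map; since $\Sm\cong S^2$ is simply connected, $\pi$ restricts to a diffeomorphism on each component of $\Gm$, so $\Gm$ is a union of normal graphs over $\Sm$. Comparing $\om^2\w\om^3$ with $\pi^*\vol_{\Sm}$ by means of the near-zero-section expansions \eqref{series1} of $a,b,c$, the graphing sections have $\CC^0$- and $\CC^1$-norm $\lesssim\sqrt{\vep}$; after shrinking $\vep$ these are $<\delta$, and \cite[Theorem 6.2]{ref_TsaiW2} applies.

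The only genuine content lies in this last translation step, and within it the point that needs care is the passage from ``$\Gm\subset U$ with everywhere nearly-horizontal tangent planes'' to ``$\Gm$ is a union of single-valued graphs over the whole of $\Sm$'': this is where compactness of $\Sm$, the covering-space argument above, and the expansions \eqref{series1} (which make the geometric quantities on $U$ uniformly comparable to those on $\Sm$) enter. The dynamical statement itself is a direct citation and requires no further work.
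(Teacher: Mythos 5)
Your proposal is correct and follows the same route as the paper: the corollary is obtained by feeding the strong stability of $\Sm$ (Proposition \ref{prop_sstable}) into \cite[Theorem 6.2]{ref_TsaiW2}, and the paper's proof is exactly this one-line citation. The translation step you supply is sound but is not needed in the paper, since the hypothesis $\sup_{q\in\Gm}\left(r^2(q)+(1+(\om^2\w\om^3)(T_q\Gm))\right)<\vep$ is already the form in which \cite[Theorem 6.2]{ref_TsaiW2} states its $\CC^1$-closeness condition, $-\om^2\w\om^3$ being precisely the extension of $\vol_\Sm$ by parallel transport along normal geodesics (as the paper notes after the corollary, using \eqref{conn1}).
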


Here $r$ is considered to be the distance function to the zero section and the $2$-form $-\om^2\w\om^3$ is parallel along geodesics normal to $\Sm$ by 
 \eqref{conn1}.

\subsection{Estimates on the derivatives}

In order to say some global property of the minimal sphere, a better understanding on the coefficient functions is needed.

\begin{lem} \label{estimate}
The coefficient functions  $a$, $b$, and $c$ of the Atiyah--Hitchin metric \eqref{metric1} obey the following relation.
\begin{align*}
1 > \frac{r\,a'(r)}{a(r)} > \frac{r\,c'(r)}{c(r)} > \frac{-r\,b'(r)}{b(r)} > 0
\end{align*}
for any $r>0$.
\end{lem}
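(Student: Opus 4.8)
<br>

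The plan is to work with the logarithmic derivatives $f_a = ra'/a$, $f_b = -rb'/b$, and $f_c = rc'/c$ directly, rather than with $a,b,c$ themselves. Since the explicit (elliptic) solutions are available but unwieldy, I would instead try to extract the needed monotonicity purely from the ODE system \eqref{ODE1} together with the known signs ($a,c>0$, $b<0$, and $a',b',c'>0$ for $r>0$) and the boundary behaviour at $r=0$ given by \eqref{series1}. From \eqref{series1} one computes the limits as $r\to 0^+$: $ra'/a\to 1$, $rc'/c\to 1/2$, and $-rb'/b\to 1/2$, so the claimed chain of inequalities degenerates to equalities in the three middle-ish slots at the endpoint; this is a strong hint that one should prove the strict inequalities for $r>0$ by a continuity/ODE-uniqueness argument, showing that equality cannot be attained or crossed.

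The core step is to set up differential inequalities for the differences. Write $a' = (a^2-(b-c)^2)/(2bc)$ and the cyclic analogues, and note $bc<0$, $ca>0$, $ab<0$. I would compute, for instance, $(a'/a)' - (c'/c)'$ or better the derivative of $a'c/(ac')$ or of $\log(ac'/(a'c))$, using \eqref{ODE1} to eliminate all first derivatives, so that the sign of each such expression becomes a rational function of $a,b,c$ that I can try to factor the way $\kp$ was factored in \eqref{curvf}. The relation $a'' /a = \kp(a,b,c)$ from \eqref{curv}, and the identities $\kp(a,b,c)=-3/(2m^2)<0$ etc., suggest the second derivatives of these logarithmic quantities have definite signs near $r=0$; more globally I expect to show each of $f_a - f_c$, $f_c - f_b$, $1 - f_a$, $f_b$ stays positive by verifying it is positive for small $r$ and that its ``escape'' to zero is forbidden by an inequality of the form $g' \ge -C(r)\, g$ whenever $g$ is small, i.e.\ $g=0$ is a barrier. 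A cleaner route may be to introduce the quantities $u = ca+ab$, $v = ab+bc$, $w = bc+ca$ from \eqref{ODE2}, whose derivatives are the symmetric products $\tfrac{2}{abc}$ times pairwise products; then $a'/a = \tfrac{u'+w'-v'}{2u\,?}$-type rearrangements may linearise the comparisons. I would try both and keep whichever makes the sign manifest.

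I would organise the proof as: (1) record the $r\to0^+$ limits and signs; (2) prove $0 < f_b$, equivalently $b'<0$... no, $b>0$? — rather $b<0,\ b'>0$ gives $-rb'/b>0$ immediately, so this inequality is free; (3) prove $1 > f_a$, i.e. $ra'<a$, which from \eqref{ODE1} is $r(a^2-(b-c)^2) < 2abc\cdot$(sign issues)$\,$—handle the sign of $bc<0$ carefully, so this is $r(a^2-(b-c)^2) > 2|bc|\,a$ after flipping, and should follow since $a<2r$ won't quite do it, so instead differentiate $a - ra'$ and use $a'' <0$ near $0$ plus a global argument; (4) the two genuinely new inequalities $f_a > f_c$ and $f_c > f_b$, via the difference-ODE/barrier method above; (5) assemble. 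The main obstacle I anticipate is step (4) together with the global (not just near $r=0$) validity: near $r=0$ everything follows from the Taylor expansions \eqref{series1}, but to get the inequalities for all $r>0$ I need a monotonicity or convexity structure — most likely showing that $f_a/f_c$ (or $f_a - f_c$, etc.) is itself monotone in $r$, or that the difference satisfies a linear homogeneous ODE inequality forcing it to keep its sign — and finding the right combination that makes the rational-function sign after substituting \eqref{ODE1} transparently positive is the delicate part; the factorisations used for $\kp$ in \eqref{curvf} and the reformulation \eqref{ODE2} are the tools I would lean on.
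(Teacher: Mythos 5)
Your proposal correctly disposes of the two easy parts: $-rb'/b>0$ is indeed free from the known signs $b<0$, $b'>0$, and for $1>ra'/a$ the idea of differentiating $a-ra'$ (the paper uses $a/a'-r$) and invoking $a''<0$ is exactly what the paper does --- though note that you need $a''<0$ for \emph{all} $r>0$, which is a genuinely nontrivial fact (Lemma 10.10 of Atiyah--Hitchin), not something that follows from behaviour near $0$ plus an unspecified ``global argument.'' A small but telling error: from \eqref{series1} one gets $rc'(r)/c(r)\to 0$ and $-rb'(r)/b(r)\to 0$ as $r\to 0^+$ (since $c(0)=m\neq 0$ and $b(0)=-m\neq 0$), not $1/2$; so the degeneracies at the endpoint sit in the slots $1>ra'/a$, $rc'/c>-rb'/b$, and $-rb'/b>0$, while $ra'/a>rc'/c$ is non-degenerate there.

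The genuine gap is in your step (4), the two middle inequalities, which you leave as a search for ``the right combination that makes the rational-function sign transparently positive'' --- i.e.\ the proof is not actually carried out where it matters. The resolution in the paper is to pass to the Atiyah--Hitchin variables $x=a/c$, $y=b/c$ and to use the invariant region \eqref{key}, in particular the bound $y<x-1$ (equivalently $a>b+c$), which is itself established by a barrier argument in \cite{ref_AH}*{Lemma 10.1}. Once that region is known, no further ODE comparison is needed: one computes directly from \eqref{ODE1} that
\begin{align*}
\frac{a'}{a}-\frac{c'}{c} = \frac{1}{c}\,\frac{(1-x)(1+x-y)}{x(-y)}
\qquad\text{and}\qquad
\frac{c'}{c}+\frac{b'}{b} = \frac{1}{c}\,\frac{1-x+y}{y} ~,
\end{align*}
and both are manifestly positive on \eqref{key} (for the second, numerator and denominator are both negative). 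So your barrier philosophy is in the right spirit --- the barrier just lives at the level of the region for $(x,y)$ rather than at the level of the logarithmic derivatives themselves --- but without identifying the conserved inequality $a>b+c$ (or an equivalent), the two central comparisons remain unproved, and the proposal does not yet constitute a proof.
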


\begin{proof}
This lemma can be proved easily by using the theory established in \cite[ch.9 and 10]{ref_AH}.  The variable $\xi$ in \cite{ref_AH} is the geodesic distance $r$ here.  The key ingredients are summarized as follows.  Atiyah and Hitchin introduced the functions
\begin{align*}
x = \frac{a}{c}  \qquad\text{and}\qquad  y = \frac{b}{c} ~.
\end{align*}
Both $x$ and $y$ can serve as the radial coordinate.  In fact, they mainly use $x$ as the variable in \cite[ch.10]{ref_AH}.  At $r=0$, $(x(0),y(0)) = (0,-1)$, and $(x(r),y(r))\to(1,0)$ as $r\to\infty$.  That is to say, the domain of $x$ is $[0,1)$; the domain of $y$ is $[-1,0)$.  When $r>0$, the curve $(x(r),y(r))$ lies entirely in the region
\begin{align}
y < -1+x ~,\quad  0<x<1 ~,\quad  -1<y<0 ~.
\label{key} \end{align}
The bound $y \leq -1+x$ is given by \cite[Lemma 10.1]{ref_AH}.  From its proof, it is not hard to see that the equality only happens at $(x,y)=(0,-1)$, or $r=0$.  It is also illustrative to give their expansions \eqref{series1} near $r = 0$,
\begin{align*}
x(r) = \frac{2}{m}r - \frac{1}{m^2}r^2 + \CO(r^3)  \qquad\text{and}\qquad
y(r) = -1 + \frac{1}{m}r - \frac{1}{2m^2}r^2 + \CO(r^3) ~.
\end{align*}

The equations \eqref{ODE1} become
\begin{align*}
a' &= \frac{x^2-(y-1)^2}{2y} ~,
& b' &= \frac{y^2-(x-1)^2}{2x} ~,
& c' &= \frac{1-(x-y)^2}{2xy} ~.
\end{align*}
The derivatives of $x(r)$ and $y(r)$ are
\begin{align*}
x' = - \frac{1}{c}\,\frac{(1-x)(1+x-y)}{y}  \qquad\text{and}\qquad
y' = - \frac{1}{c}\,\frac{(1-y)(1+y-x)}{x} ~.
\end{align*}

It follows from \eqref{key} that $b'>0$ when $r>0$.  We compute
\begin{align*}
\frac{c'}{c} + \frac{b'}{b} &= \frac{1}{c}\,\frac{1-x+y}{y} ~, \\
\frac{a'}{a} - \frac{c'}{c} &= \frac{x'}{x} = \frac{1}{c}\,\frac{(1-x)(1+x-y)}{x(-y)} ~.
\end{align*}
According to \eqref{key}, both quantities are positive when $r>0$.

It remains to show that $a\geq r\,a'$.  With \eqref{series1}, $\frac{a}{a'} = r + \frac{1}{2m^2}r^3 + \CO(r^4)$ near $r = 0$.  Hence, $\frac{a}{a'} > r$ for sufficiently small $r$.  The derivative of $\frac{a}{a'} - r$ in $r$ is $\frac{a}{(a')^2}(-a'')$.  By invoking \cite[Lemma 10.10]{ref_AH}, $a''<0$ when $r>0$.  We will say something about their proof momentarily.

To sum up, $\frac{a}{a'}-r$ is monotone increasing in $r$, and is positive for small $r$.  Therefore, it must be positive for any $r>0$.  This finishes the proof of this lemma.
\end{proof}

It follows from \eqref{curv} that
\begin{align*}
a'' &= a\,\kp(a,b,c) \\
&= \frac{1}{c}\,\frac{2x^4 - x^2(y-1)^2 - x^3(1+y) + x(1-y)^2(1+y) - (1-y)^2(1+y)^2}{2xy^2}
\end{align*}
where $\kp$ is defined by \eqref{curvf}.  One can study the maximum of the numerator over the closure of \eqref{key}.  It turns out that the maximum is $0$, and is achieved only at $(0,-1)$ and $(1,0)$.  The argument of \cite[Lemma 10.10]{ref_AH} is cleverer.  They work with
\begin{align*}
a'' &= \left( \frac{x}{y} + \frac{1-x^2-y^2}{2y^2}\,\frac{\dd y}{\dd x} \right) \frac{\dd x}{\dd r} ~,
\end{align*}
and analyze it according to whether $\frac{\dd y}{\dd x}\leq 1$ or not.  The sign of $b''$ is examined in \cite[Lemma 10.19]{ref_AH}; it is negative when $r>0$.  For $c''$, it is positive for small $r$, and negative for large $r$.  See \cite[last paragraph on p.99]{ref_AH}.  Note that the notion of convexity/concavity in \cite{ref_AH} is different from the usual one.  These convexity/concavity properties are directly related to the geometry of the surfaces mentioned in section \ref{sec_surface}.

\subsection{Calibration}

We show that the minimal sphere is actually a minimizer of the area functional.  According to J.~Lotay, this was known to M.~Micallef.  The theory of calibration can be found in \cite[\S II.4]{ref_HL}.

\begin{prop}
The minimal sphere $\Sm$ in the Atiyah--Hitchin manifold is a calibrated submanifold.  Therefore, it minimizes the area within its homology class.
\end{prop}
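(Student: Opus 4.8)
The plan is to produce an explicit closed $2$-form on $M$ of comass $\le 1$ whose restriction to $\Sm$ is the area form; the bundle structure hands us the candidate. Let $\phi = m^2\,\sm^2\w\sm^3$, that is, $m^2$ times the pullback to $M$, via the bundle projection $\pi\colon M\to S^2$ onto the zero section, of the area form of the round metric of Gauss curvature $1$ on $S^2$. By the structure equations $\dd\sm^1 = \sm^2\w\sm^3$ and its cyclic permutations --- equivalently, by \eqref{cal} together with the identity $p^2-q^2 = 4bc$, which exhibits $\phi$ as $m^2\,(1+|z|^2)^{-2}\,2i\,\dd z\w\dd\br z$, the pullback of a $2$-form on the base --- the form $\phi$ is closed.

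Next I would compute the pointwise comass of $\phi$. Using $\om^2 = b\,\sm^2$ and $\om^3 = c\,\sm^3$, at a point at distance $r$ from $\Sm$ we have $\phi = \frac{m^2}{b(r)c(r)}\,\om^2\w\om^3$, a fixed multiple of the wedge of two members of an orthonormal coframe, so its comass there equals $\frac{m^2}{|b(r)c(r)|}$. On the other hand $\Sm$ is oriented by $\sm^2\w\sm^3$ and carries the induced metric $m^2\big((\sm^2)^2+(\sm^3)^2\big)$, so its area form is exactly $m^2\,\sm^2\w\sm^3 = \phi|_\Sm$. Hence $\phi$ will calibrate $\Sm$ as soon as I verify the pointwise bound $|b(r)c(r)|\ge m^2$ for all $r\ge 0$.

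To obtain that bound I would invoke Lemma \ref{estimate}: at $r=0$ one has $b(0)c(0)=-m^2$, while for $r>0$ we have $bc<0$ and $\frac{c'}{c} > \frac{-b'}{b}$, hence $\frac{(bc)'}{bc} = \frac{b'}{b}+\frac{c'}{c} > 0$, which --- since $bc<0$ --- forces $(bc)'<0$; thus $bc$ is strictly decreasing on $(0,\infty)$ from $-m^2$, giving $b(r)c(r)\le -m^2$ with equality only at $r=0$. Consequently $\phi$ has comass $\le 1$ everywhere, equal to $1$ precisely along $\Sm$, i.e.\ $\phi$ is a calibration calibrating $\Sm$. The fundamental theorem of calibrated geometry \cite[\S II.4]{ref_HL} then yields, for any closed surface (or integral $2$-current) $\Sm'$ homologous to $\Sm$, that $\mathrm{Area}(\Sm')\ge\int_{\Sm'}\phi = \int_\Sm\phi = \mathrm{Area}(\Sm)$, which is the assertion. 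I expect no real obstacle here beyond sign bookkeeping; the only genuine input is the monotonicity $\frac{c'}{c}>\frac{-b'}{b}$ from Lemma \ref{estimate}, and the one delicate point is to keep careful track of orientations and metrics so that $\phi|_\Sm$ comes out as the area form of $\Sm$ and not its negative.
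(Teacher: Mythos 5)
Your proposal is correct and follows essentially the same route as the paper: the calibration is the same form $m^2\,\sm^2\w\sm^3 = \frac{m^2}{bc}\,\om^2\w\om^3$, closedness and the restriction to $\Sm$ are checked the same way, and the comass bound $|bc|\geq m^2$ is deduced from $(bc)'<0$, which is exactly the consequence of Lemma \ref{estimate} the paper invokes. Your write-up merely spells out the intermediate steps (e.g.\ $(bc)'=bc\left(\tfrac{b'}{b}+\tfrac{c'}{c}\right)$ with $bc<0$) in more detail.
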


\begin{proof}
The only task is to construct a closed $2$-form of comass one, whose restriction on $\Sm$ coincides with its area form.  Take $\Ta = m^2\,\sm^2\w\sm^3 = \frac{-m^2}{bc}\om^2\w\om^3$.  From the expression $m^2\,\sm^2\w\sm^3$, it is easy to see that $\dd\Ta = 0$ and $\Ta|_\Sm = {\rm dvol}_\Sm$.

It remains to check that comass one condition.  According to Lemma \ref{estimate}, $(bc)'<0$ when $r>0$.  It follows that $bc \leq -m^2$ for any $r$, which implies that $\Ta$ has comass one.
\end{proof}

\subsection{Two-convexity of the distance function}

In this section, we apply the barrier function argument to prove the rigidity of the minimal sphere in the Atiyah--Hitchin manifold.  Here is a simple fact in linear algebra.

\begin{lem} \label{linear}
Let $Q$ be a symmetric matrix on $\BR^n$, with eigenvalues $\ld_n\geq\cdots\geq\ld_2\geq\ld_1$.  Fix $k\in\{1,\cdots,n\}$.  Then, the minimum of
\begin{align*}
\left\{\, \tr_L(Q) ~\big|~ L\subset\BR^n \text{ is a vector subspace of dimension } k \,\right\}
\end{align*}
is exactly $\sum_{j=1}^k\ld_j$.
\end{lem}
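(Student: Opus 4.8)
The plan is to reduce the statement to a standard fact about the Rayleigh quotient / Courant--Fischer variational characterization of eigenvalues, and to make the min-attainment explicit. First I would fix an orthonormal eigenbasis $v_1,\dots,v_n$ of $Q$ with $Qv_j=\ld_j v_j$, ordered so that $\ld_1\le\ld_2\le\cdots\le\ld_n$. For any $k$-dimensional subspace $L$, pick an orthonormal basis $u_1,\dots,u_k$ of $L$; then $\tr_L(Q)=\sum_{i=1}^k\ip{Qu_i}{u_i}$, and this quantity is independent of the chosen orthonormal basis of $L$ (it is the trace of the compression $P_L Q|_L$, an intrinsic invariant of $L$). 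The candidate minimizer is $L_0=\spn\{v_1,\dots,v_k\}$, for which $\tr_{L_0}(Q)=\sum_{j=1}^k\ld_j$, so the content is the lower bound $\tr_L(Q)\ge\sum_{j=1}^k\ld_j$ for every $k$-plane $L$.

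For the lower bound I would give the clean argument via the spectral expansion. Writing each $u_i=\sum_j c_{ij}v_j$ with $\sum_j c_{ij}c_{i'j}=\delta_{ii'}$ (orthonormality of the $u_i$), one gets
\begin{align*}
\tr_L(Q)=\sum_{i=1}^k\sum_{j=1}^n \ld_j\,c_{ij}^2=\sum_{j=1}^n \ld_j\, t_j,\qquad t_j:=\sum_{i=1}^k c_{ij}^2.
\end{align*}
Here $0\le t_j\le 1$ (since $(c_{ij})_i$ is part of an orthonormal family, its squared length in the $i$-direction is at most $1$, as the columns of an orthonormal $k\times n$ system have norm $\le 1$) and $\sum_{j=1}^n t_j=k$ (the rows have unit norm). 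So $\tr_L(Q)=\sum_j \ld_j t_j$ is a value of a linear functional over the polytope $\{t\in[0,1]^n:\sum_j t_j=k\}$; since the coefficients $\ld_j$ are increasing in $j$, the minimum of $\sum_j\ld_j t_j$ over this polytope is attained at the vertex $t_1=\cdots=t_k=1$, $t_{k+1}=\cdots=t_n=0$, giving $\sum_{j=1}^k\ld_j$. Combined with the computation for $L_0$ this proves the claim.

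The step that needs the most care — really the only non-bookkeeping point — is justifying the two constraints $0\le t_j\le1$ and $\sum_j t_j=k$ on the weights, i.e.\ that the $k\times n$ matrix $C=(c_{ij})$ with orthonormal rows has column-norm-squares in $[0,1]$ summing to $k$. This is just the observation that $CC^{\mathsf T}=I_k$ forces $C^{\mathsf T}C$ to be an orthogonal projection of rank $k$, whose diagonal entries are the $t_j\in[0,1]$ and whose trace is $k$. Alternatively, one can avoid the polytope language entirely and argue by the dimension-counting form of Courant--Fischer: any $k$-plane $L$ meets $\spn\{v_k,v_{k+1},\dots,v_n\}$ (dimension $n-k+1$) nontrivially, which bounds the top of the spectrum of the compression, and iterating gives all $k$ eigenvalue bounds; but the weighted-sum argument above is the most direct route to the exact value $\sum_{j=1}^k\ld_j$. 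I would also remark that equality forces $L=L_0$ only when $\ld_k<\ld_{k+1}$, though the statement as given does not require uniqueness.
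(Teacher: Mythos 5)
Your argument is correct and complete, but it takes a genuinely different route from the paper. The paper regards the domain as a compact Stiefel manifold, assumes the extremum is attained at some $L$, and uses the Lagrange multiplier (first-order criticality) condition to conclude that $Q\bv_j\in L$ for each basis vector, i.e.\ that $L$ is $Q$-invariant; the value $\sum_{j=1}^k\ld_j$ then drops out from the standard fact that invariant subspaces of a symmetric matrix are spanned by eigenvectors. Your proof instead establishes the lower bound $\tr_L(Q)\ge\sum_{j=1}^k\ld_j$ directly for \emph{every} $k$-plane, by writing $\tr_L(Q)=\sum_j\ld_j t_j$ with $t_j$ the diagonal entries of the rank-$k$ orthogonal projection $C^{\mathsf T}C$ onto $L$, so that $0\le t_j\le 1$ and $\sum_j t_j=k$, and then minimizing the linear functional over that polytope. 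What your approach buys: it is entirely elementary (no appeal to compactness for existence of a minimizer, no critical-point analysis on a manifold with its attendant smoothness and constraint bookkeeping), it identifies the exact value in one computation, and it isolates the one nontrivial input — that $C^{\mathsf T}C$ is a projection — cleanly. What the paper's approach buys is brevity and a conceptual punchline (extremal subspaces are invariant subspaces), at the cost of leaving the verification of the Lagrange multiplier equation and the final eigenvalue comparison to the reader. One minor point in your write-up: the claim that the minimum of $\sum_j\ld_j t_j$ over the polytope is attained at the vertex $t=(1,\dots,1,0,\dots,0)$ deserves the one-line verification $\sum_j\ld_j t_j-\sum_{j\le k}\ld_j=\sum_{j\le k}\ld_j(t_j-1)+\sum_{j>k}\ld_j t_j\ge\ld_k\left(\sum_j t_j-k\right)=0$, but this is routine and does not affect correctness.
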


\begin{proof}
Regard the domain as the Stiefel manifold.  Suppose that extremum is achieved by $L$, which has orthonormal basis $\{\bv_1,\cdots,\bv_k\}$.  The Lagrange multiplier equation says that $Q\bv_j\in L$ for any $j\in\{1,\ldots,k\}$.  That is to say, $L$ is invariant under $Q$.  This lemma follows from the standard property of symmetric matrices.
\end{proof}

\begin{defn}
On a Riemannian manifold, a smooth function $f$ is said to be $k$-convex at a point $p$ if the sum of the smallest $k$ eigenvalues of $\Hess(f)|_p$ is positive.
\end{defn}

It turns out that there is a naturally defined (semi-) two-convex function on the Atiyah--Hitchin manifold.

\begin{thm} \label{thm_unique}
In the Atiyah--Hitchin manifold $M$, the surface $\Sm$ is the only compact minimal 2-surface.  Also, there exists no compact, three-dimensional, minimal submanifold.
\end{thm}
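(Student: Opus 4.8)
The plan is to exhibit an explicit nonnegative function $f$ on $M$, vanishing exactly on $\Sm$, whose Hessian is two-convex (i.e.\ the sum of its two smallest eigenvalues is positive) away from $\Sm$, and is suitably nonnegative along $\Sm$. The natural candidate is a function of the geodesic distance alone, say $f = \phi(r)$ with $\phi(0)=0$; the simplest choice $f = r^2$ (or $\phi(r) = \int_0^r s\,ds$ suitably normalized) is the first thing I would try, adjusting the profile $\phi$ if $r^2$ is not quite enough. The point of two-convexity is the standard maximum-principle/barrier argument: if $\Gm$ is a compact minimal $k$-submanifold with $k \ge 2$, then restricting $f$ to $\Gm$ and computing $\Delta_\Gm f$ at an interior maximum point $q$ of $f|_\Gm$, one gets $\Delta_\Gm f(q) = \tr_{T_q\Gm}\Hess f - \langle H, \nabla f\rangle = \tr_{T_q\Gm}\Hess f$ since $\Gm$ is minimal; by Lemma~\ref{linear} this trace over a $k$-plane is at least the sum of the $k$ smallest eigenvalues of $\Hess f$, which is positive by two-convexity once $k\ge 2$ and $q\notin\Sm$. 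That contradiction forces $q\in\Sm$, hence $\Gm\subset\Sm$ (as $f$ attains its minimum value $0$ everywhere on $\Gm$), and then $\Gm=\Sm$ since $\Sm$ is two-dimensional; for $k=3$ no such $\Gm$ can sit inside the surface $\Sm$, giving the nonexistence statement.

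The concrete work is therefore to compute $\Hess f$ for $f=\phi(r)$ and verify two-convexity. Using $\om^0 = -\dd r$ and the connection forms \eqref{conn1}, $\nabla\,\dd r$ is diagonal in the coframe $\{\om^1,\om^2,\om^3\}$ with entries $\tfrac{a'}{a},\tfrac{b'}{b},\tfrac{c'}{c}$ (times $\om^i\otimes\om^i$), so
\begin{align*}
\Hess f = \phi''(r)\,\om^0\otimes\om^0 + \phi'(r)\left( \tfrac{a'}{a}\,\om^1\otimes\om^1 + \tfrac{b'}{b}\,\om^2\otimes\om^2 + \tfrac{c'}{c}\,\om^3\otimes\om^3 \right) ~.
\end{align*}
For $\phi(r)=r^2$ this reads $2\,\om^0\otimes\om^0 + 2r\big(\tfrac{a'}{a}\om^1\otimes\om^1 + \tfrac{b'}{b}\om^2\otimes\om^2 + \tfrac{c'}{c}\om^3\otimes\om^3\big)$, whose eigenvalues are $2$, $\tfrac{2r a'}{a}$, $\tfrac{2r b'}{b}$, $\tfrac{2r c'}{c}$. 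By Lemma~\ref{estimate} we have, for $r>0$, the ordering $1 > \tfrac{ra'}{a} > \tfrac{rc'}{c} > \tfrac{-rb'}{b} > 0$; in particular $\tfrac{rb'}{b}<0$ is the unique negative eigenvalue, and two-convexity amounts to checking $\tfrac{rc'}{c} + \tfrac{rb'}{b} > 0$, equivalently $c'/c > -b'/b$, which is exactly the inequality $\tfrac{rc'}{c} > \tfrac{-rb'}{b}$ supplied by Lemma~\ref{estimate}. (If one prefers to sum the smallest two including the $\om^0$-direction the relevant bound is even easier since that eigenvalue is $2$.) Thus $\Hess(r^2)$ is two-convex for all $r>0$. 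On $\Sm$ itself ($r=0$) the $\om^1\otimes\om^1,\om^2\otimes\om^2,\om^3\otimes\om^3$ coefficients degenerate, so one records instead that $\Hess(r^2)|_\Sm = 2\,\om^0\otimes\om^0$ is nonnegative, which is all the maximum principle needs at a boundary point of the argument.

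With this in hand the theorem follows cleanly: let $\Gm^k\subset M$ be compact minimal with $k\in\{2,3\}$, let $q\in\Gm$ maximize $f=r^2|_\Gm$; then $0 = \langle H,\nabla f\rangle = \Delta_\Gm f(q) = \tr_{T_q\Gm}\Hess f(q) \ge (\text{sum of smallest }k\text{ eigenvalues of }\Hess f(q))$, which is $>0$ unless $q\in\Sm$; hence $\max_\Gm f = 0$, so $\Gm\subset\{r=0\}=\Sm$. For $k=2$ this gives $\Gm=\Sm$ (a connected surface inside a surface, and $\Sm$ is compact); for $k=3$ it is impossible since $\dim\Sm=2$. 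The main obstacle — modulo trusting the Atiyah--Hitchin estimates already packaged in Lemma~\ref{estimate} — is purely bookkeeping: making sure the maximum-principle step is legitimate at points of $\Sm$ where the coefficient functions degenerate, i.e.\ that $r^2$ really is a smooth function on $M$ near $\Sm$ (it is, since $r$ is the distance to a smooth submanifold and $r^2$ is smooth there) and that the weak inequality $\Hess(r^2)\ge 0$ on $\Sm$ together with strict two-convexity off $\Sm$ suffices; a standard strong-maximum-principle argument handles the borderline case if $\Gm$ touches $\Sm$ tangentially. If $r^2$ turns out to be merely weakly two-convex in some regime, the fallback is to replace it by $\phi(r)$ with $\phi$ convex increasing and $\phi''$ chosen to push the $\om^0$-eigenvalue up, but the computation above suggests this is unnecessary.
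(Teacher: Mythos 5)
Your proposal is correct and follows essentially the same route as the paper: the same barrier function $r^2$, the same Hessian computation from \eqref{conn1}, the same appeal to Lemma~\ref{estimate} and Lemma~\ref{linear} for two-convexity when $r>0$, and the same maximum-principle conclusion. (The chain ``$0=\langle H,\nabla f\rangle=\Delta_\Gm f(q)$'' in your last paragraph is a notational slip --- one should say $\Delta_\Gm f=\tr_{T\Gm}\Hess f\ge 0$ everywhere while $\Delta_\Gm f(q)\le 0$ at the maximum --- but the intended argument is the correct one and matches the paper's.)
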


\begin{proof}
Consider the square of the distance function to $\Sm$ with respect to \eqref{metric1}.  By \eqref{conn1},
\begin{align*}
\dd r^2 &= -2r\,\om^0 ~, \\
\Rightarrow\quad \Hess(r^2) &= 2\left( \om^0\ot\om^0 + r\frac{a'}{a}\,\om^1\ot\om^1 + r\frac{b'}{b}\,\om^2\ot\om^2 + r\frac{c'}{c}\,\om^3\ot\om^3 \right) ~.
\end{align*}
 Lemma \ref{estimate} and Lemma \ref{linear} imply that $r^2$ is two-convex when $r>0$.

Another way to derive the two-convexity of $r^2$, albeit only in a tubular neighborhood of $\Sigma$, is to apply \cite[Proposition 4.1]{ref_TsaiW2}, according to which strong stability of $\Sm$ implies that there exist positive constants $\vep$ and $\dt$ such that
\begin{align*}
\tr_L\Hess(r^2) &\geq \dt\,r^2
\end{align*}
at any point $p$ with $r\in[0,\vep)$, and any two-plane $L\subset T_pM$.  This can also be proved directly by using the expansions \eqref{series1}, and switching back to the rectangular coordinate for the fibers.

The rest of the argument is almost the same as that for \cite[Lemma 5.1]{ref_TsaiW}.  Suppose that $N\subset M$ is a compact minimal submanifold with dimension no less than $2$.  It follows from the semi-two-convextiy of $r^2$ that
\begin{align*}
\Delta^N(r^2|_N) = \tr_N( \Hess(r^2) ) \geq 0 ~.
\end{align*}
Appealing to the maximum principle, $r^2$ must be a constant on $N$.  Then, $\tr_N\Hess(r^2)$ vanishes.  This occurs only when $r^2$ vanishes on $N$.
\end{proof}

In view of the recent work of \cite{ref_LS}, the uniqueness theorem extends to the weaker setting of stationary integral varifolds. 

Here are some further remarks:
\begin{enumerate}
\item For the examples studied in \cite{ref_TsaiW}, the minimal submanifolds are totally geodesic and the corresponding $r^2$ is (semi-one-) convex.  It leads to a stronger rigidity phenomenon which does not hold true in the Atiyah--Hitchin manifold.

\item For small $r$, the series expansion of $\Hess(r^2)$ is derived for a general minimal submanifold in \cite[Proposition 4.1]{ref_TsaiW2}.  The second fundamental form appears as the coefficients of the linear term.  Unless it is a totally geodesic, $\Hess(r^2)$ cannot be semi-positive definite for small $r$.

\item Bates and Montgomery \cite{ref_BM} proved that the Atiyah--Hitchin manifold admits closed geodesics, and thus cannot support any convex function.

\item It can be shown that those examples of closed minimal $2$-spheres in hyper-K\"ahler K3 surfaces constructed by Foscolo \cite[Theorem 7.4]{ref_Fo} are indeed strongly stable.  The distance function to such a minimal $2$-surface is locally two-convex, and thus a local uniqueness theorem can be proved for these examples.

To say more, Foscolo proved that the minimal sphere still obeys \eqref{qtn}.  To validate Proof 2 of Proposition \ref{prop_sstable}, it remains to check that the minimal sphere has positive Gaussian curvature.  When the gluing parameter in \cite{ref_Fo} is sufficiently small, one can argue by continuity that the Gaussian curvature is still positive.

\item Dancer \cite{ref_Dancer} constructed non-trivial deformations of the hyper-K\"ahler metric on $M$.  Recently, G.~Chen and X.~Chen \cite{ref_CC} proved that Atiyah--Hitchin manifold and the Dancer's deformations are all the ALF-$D_1$ manifolds.  When the deformation parameter is small, it can be shown that the minimal $2$-sphere persists, and is still strongly stable and locally unique.  It is interesting to investigate the global uniqueness of the minimal $2$-sphere in Dancer's deformation.

\item The ALF-$D_0$ manifold is the quotient of $M$ by an isometric $\BZ/2$-action.  The image of $\Sm$ under the quotient map is a minimal $\BR\mathbb{P}^2$.  Since the $\BZ/2$ action is isometric, the corresponding statements of Proposition \ref{prop_sstable} and Theorem \ref{thm_unique} still hold true.  Namely, the minimal $\BR\mathbb{P}^2$ is strongly stable, and is globally unique.

\end{enumerate}
\begin{bibdiv}
\begin{biblist}

\bib{ref_AH1}{article}{
   author={Atiyah, Michael},
   author={Hitchin, Nigel},
   title={Low energy scattering of nonabelian monopoles},
   journal={Phys. Lett. A},
   volume={107},
   date={1985},
   number={1},
   pages={21-25},
}

\bib{ref_AH}{book}{
   author={Atiyah, Michael},
   author={Hitchin, Nigel},
   title={The geometry and dynamics of magnetic monopoles},
   series={M. B. Porter Lectures},
   publisher={Princeton University Press, Princeton, NJ},
   date={1988},
   pages={viii+134},
}

\bib{ref_BM}{article}{
   author={Bates, Larry},
   author={Montgomery, Richard},
   title={Closed geodesics on the space of stable two-monopoles},
   journal={Comm. Math. Phys.},
   volume={118},
   date={1988},
   number={4},
   pages={635--640},
}

\bib{ref_CC}{article}{
   author={Chen, Gao},
	 author={Chen, Xiuxiog},
   title={Gravitational instantons with faster than quadratic curvature decay (II)},
   journal={},
   volume={},
   date={},
   number={},
   pages={},
   eprint={arXiv:1508.07908},
   url={https://arxiv.org/abs/1508.07908},
}

\bib{ref_Dancer}{article}{
   author={Dancer, Andrew S.},
   title={Nahm's equations and hyper-K\"ahler geometry},
   journal={Comm. Math. Phys.},
   volume={158},
   date={1993},
   number={3},
   pages={545--568},
}

\bib{ref_Fo}{article}{
   author={Foscolo, Lorenzo},
   title={ALF gravitational instantons and collapsing Ricci-flat metrics on the K3 surface},
   journal={},
   volume={},
   date={},
   number={},
   pages={},
   eprint={arXiv:1603.06315},
   url={https://arxiv.org/abs/1603.06315},
	 status={to appear in J. Differential Geom.},
}

\bib{ref_GM}{article}{
   author={Gibbons, G. W.},
   author={Manton, N. S.},
   title={Classical and quantum dynamics of BPS monopoles},
   journal={Nuclear Phys. B},
   volume={274},
   date={1986},
   number={1},
   pages={183--224},
}

\bib{ref_HL}{article}{
   author={Harvey, Reese},
   author={Lawson, H. Blaine, Jr.},
   title={Calibrated geometries},
   journal={Acta Math.},
   volume={148},
   date={1982},
   pages={47--157},
}

\bib{ref_LS}{article}{
   author={Lotay, Jason D.},
   author={Schulze, Felix},
   title={Consequences of strong stability of minimal submanifolds},
   journal={},
   volume={},
   date={},
   number={},
   pages={},
   eprint={arXiv:1802.03941},
   url={https://arxiv.org/abs/1802.03941},
	 status={‎to appear in Int. Math. Res. Notices},
}

\bib{ref_MW}{article}{
   author={Micallef, Mario J.},
   author={Wolfson, Jon G.},
   title={The second variation of area of minimal surfaces in
   four-manifolds},
   journal={Math. Ann.},
   volume={295},
   date={1993},
   number={2},
   pages={245--267},
}

\bib{ref_TsaiW}{article}{
   author={Tsai, Chung-Jun},
   author={Wang, Mu-Tao},
   title={The stability of the mean curvature flow in manifolds of special holonomy},
   journal={J. Differential Geom.},
   volume={108},
   date={2018}, 
   number={3}, 
   pages={531--569},
}

\bib{ref_TsaiW2}{article}{
   author={Tsai, Chung-Jun},
   author={Wang, Mu-Tao},
   title={A strong stability condition on minimal submanifolds and its implications},
   journal={},
   volume={},
   date={}, 
   number={}, 
   pages={},
	 eprint={arXiv:1710.00433},
   url={http://arxiv.org/abs/1710.00433},
}

\end{biblist}
\end{bibdiv}

\end{document}